\theoremstyle{plain}
\newtheorem{theorem}{Theorem}
\newtheorem{proposition}[theorem]{Proposition}
\newtheorem{lemma}{Lemma}[section]
\newtheorem{question}{Question}
\theoremstyle{remark}
\theoremstyle{definition}
\newcommand{\eps}{\varepsilon}
\newcommand{\Z}{\mathds{Z}}
\newcommand{\N}{\mathds{N}}
\newcommand{\PP}{\mathbb{P}}
\renewcommand{\P}[1]{\PP \left [ #1 \right ]}
\newcommand{\cal}{\mathcal}
\newcommand{\defn}{\textsf}
\title{Homogenization via sprinkling} \author{Itai Benjamini}
\address{The Weizmann Institute, Rehovot, Israel}
\email{itai.benjamini@weizmann.ac.il}
\author{Vincent Tassion}
\address{D\'epartement de Math\'ematiques Universit\'e de Gen\`eve,
  Gen\`eve, Switzerland} \email{Vincent.Tassion@unige.ch}
\thanks{Research supported by the Swiss NSF (VT)}
\date{\today} 
\begin{document}
% ***************CORPS DU DOCUMENT***********
\maketitle

\begin{abstract}
  We show that a superposition of an $\eps$-Bernoulli bond percolation
  and any everywhere percolating subgraph of $\Z^d$, $d\ge 2$, results
  in a connected subgraph, which after a renormalization   
  dominates  supercritical Bernoulli percolation. This result, which confirms
  a conjecture from \cite{benjamini2000effect}, is mainly motivated by
  obtaining finite volume characterizations of uniqueness for general
  percolation processes.
\end{abstract}

\section{Introduction}
\label{sec:introduction}
Consider a deterministic subset $X$ of the edges of the standard
$d$-dimensional lattice $\mathbb Z^d$, $d\ge 2$. Assume that $X$ is
\defn{percolating everywhere}, meaning that every vertex of $\Z^d$ is
in an infinite connected component of the graph $(\Z^d,X)$. Example of
such graphs are foliation by lines or spanning forests. Consider then
the random set of edges $Y=X\cup\omega$, obtained by adding to $X$ the
open edges $\omega$ of a Bernoulli percolation with density $\eps>0$.
We prove that for every choice of $X$ and every $\eps>0$, the graph
$Y$ is almost surely connected and has large scale geometry similar to
that of supercritical Bernoulli percolation. In
\cite{benjamini2000effect}, this result was already proved in
dimension $d=2$, and conjectured for higher dimensions. The proof of
\cite{benjamini2000effect} for $d=2$ relies strongly on planar
duality, and cannot extend to higher dimensions. In this paper, we
develop new robust methods that allow us to extend the result of
\cite{benjamini2000effect} to any dimension $d\ge 2$. This is the
content of Theorem~\ref{thm:main} below. The main step in our proof is
of independent interest (see Lemma~\ref{lem:stretchExponential}). We
obtain a finite-volume characterization for the uniqueness of the
infinite connected component in~$Y$. More precisely, we show that with
high probability, all the points in the ball of radius $n$ are
connected by a path of $Y$ which lies inside the ball of radius $2n$.
This finite-size criterion approach to uniqueness is in the same
spirit of the original proof of uniqueness for Bernoulli percolation
(see \cite{AKN87} and the recent work of \cite{cerf2013lower}).

As a consequence of the finite-size criterion mentioned above, we show
that a renormalized version of $Y$ dominates highly supercritical
percolation. In particular, $Y$ percolates in sufficiently thick slabs
and in half-spaces. This result is analogous to the Grimmett-Marstrand
theorem~\cite{grimmett1990supercritical}, and we expect most of the
properties of supercritical percolation to hold for $Y$ (see
\cite{grimmett1999}, Chapters 7 and 8). The Grimmett-Marstrand theorem
is a fundamental and powerful tool in supercritical Bernoulli
percolation, but its proof does not provide directly quantitative
estimates % (see
% \cite{duminil2015quantitative}) 
and relies on the specific symmetries of $\Z^d$ (see
\cite{martineau2013locality} for an extension to some graphs with less
symmetries than the canonical~$\Z^d$). We hope that the method in the
present paper could be useful to obtain quantitative and robust proofs
of the Grimmett-Marstrand theorem.

We do not require any symmetry hypothesis on the set $X$. Thus, the
percolation process $Y$ is not necessarily invariant under the
symmetries of $\Z^d$. Therefore, the uniqueness of the infinite
cluster cannot be derived from the Burton-Keane
theorem~\cite{burton1989uniqueness}. In this sense, our result can be
seen as a generalization of the Burton-Keane theorem. Recently,
Teixeira \cite{teixeira2014percolation} considered general percolation
processes with high marginals on graphs with polynomial volume growth.
Under some additional assumptions but without requiring symmetries or
invariance, he obtains uniqueness of the infinite cluster. Since $Y$
does not necessarily have high marginals, our uniqueness result is not
implied by Teixeira's work.

We also obtain that the critical value for Bernoulli percolation on
$Y$ satisfies $p_c(Y)<1$. This is related to the initial motivation
in~\cite{benjamini2000effect} for introducing the random set $Y$.
Proving that $p_c(Y)<1$ can be seen as an intermediate question in
order to understand under which conditions a random infinite subgraph
$G$ of $\Z^d$ has $p_c(G)<1$. Finding such conditions is very
challenging, and related to famous open problems in percolation,
e.g.\@ the absence of infinite cluster at criticality for Bernoulli
percolation (see \cite{benjamini2000effect} for more details). We give
related questions in Section~\ref{sec:questions}.

\subsection{Main results}
\label{sec:main-results}

We prove the following theorem, which confirms a conjecture of
Benjamini, Häggström and Schramm \cite{benjamini2000effect}. (If
needed, see Section~\ref{sec:notation-definitions} for notation and
definitions.)

\begin{theorem}
\label{thm:main}
  Let $X$ be a fixed everywhere percolating subgraph of $\Z^d$,
and let $Y = X\cup \omega$ be obtained from $X$ by adding an
$\eps$-percolation $\omega$.
For any $\eps > 0$, the following hold.
\begin{enumerate}[(i)]
\item \label{item:1} The subgraph $Y$ is connected a.s.
\item \label{item:2} The critical parameter for Bernoulli percolation
  on $Y$ satisfies $p_c(Y)<1$ a.s. 
\item \label{item:3} The subgraph $Y$ percolates in the upper
half-space a.s.
\item \label{item:4} There exists $L=L(\eps,d)$ such that $Y$
  percolates in the slab $\Z^2\times \{0,\ldots,L\}$ a.s.
\item\label{item:5} For every fixed $p<1$, a renormalized version of
  $Y$ stochastically dominates a $p$-Bernoulli percolation.
\end{enumerate}
\end{theorem}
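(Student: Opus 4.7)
My plan is to reduce the theorem to a single quantitative statement, the finite-size criterion mentioned in the introduction, and then derive parts (i)--(v) from it by a renormalization argument. Concretely, I would first establish (or invoke Lemma~\ref{lem:stretchExponential} for) the following: there is a function $f(n)\to 1$ (in fact stretched-exponentially fast) such that with probability at least $f(n)$, every pair of vertices in the box $B_n$ is connected by a path of $Y$ contained in $B_{2n}$. This is the promised finite-volume characterization of uniqueness, and is the real work in the paper.

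Given this criterion, the plan is to renormalize. Tile $\Z^d$ by boxes of side-length $n$, and declare a box \emph{good} if the finite-size criterion holds in a bounded neighborhood of it and, moreover, the associated connecting paths from neighboring good boxes link up (this requires overlapping the $B_{2n}$ windows of adjacent tiles, which is the standard trick). The good-box process depends on $\omega$ only through a bounded neighborhood of each box, and by taking $n$ large we can force its marginals as close to $1$ as we like. The Liggett--Schonmann--Stacey comparison theorem then lets us stochastically dominate from below a $p$-Bernoulli site percolation for any prescribed $p<1$, which gives (v). Parts (iii) and (iv) follow from (v) together with the Grimmett--Marstrand theorem applied to the renormalized process, since highly supercritical Bernoulli percolation percolates in half-spaces and in thick slabs. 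Part (i) follows by applying the finite-size criterion to boxes $B_n$ of increasing radius containing any given pair $u,v$ and using Borel--Cantelli. For (ii), I would take care to ensure that the connecting paths delivered by the finite-size criterion have length bounded in terms of $n$, so that an additional Bernoulli sub-sampling of $Y$ at a density $p$ close enough to $1$ still keeps a positive fraction of the paths intact at every scale; the same renormalization then yields $p_c(Y)<1$.

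The main obstacle is the proof of the finite-size criterion itself. Since $X$ is arbitrary and carries no symmetry, no invariance, and no independence, none of the classical uniqueness tools (Burton--Keane, translation-invariant BK/FKG manipulations, planar duality as in~\cite{benjamini2000effect}) are available. My plan would be a sprinkling/multi-scale induction: write $\eps=\sum_{k\ge 0}\eps_k$ with $\eps_k$ decreasing fast, and at scale $n_k$ use the fresh $\eps_k$-sprinkle to merge the $Y$-clusters that survive at the previous scale, exploiting only the fact that every vertex of $X$ reaches $\partial B_{n_k}$ along an $X$-path. The key combinatorial input is an upper bound on the number of distinct $X$-clusters inside $B_{n_k}$ that reach the boundary, or more precisely a control on how many of them can fail to be merged by an independent sprinkle: a handful of open $\omega$-edges anywhere on an $X$-path joining two clusters is enough to glue them, so one needs to show that the conditional probability (given the surviving cluster structure) of no such gluing edge being present decays fast enough in $n$. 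Carrying this through without any symmetry of $X$, purely from the everywhere-percolating property, is the technical heart of the argument; once it is in hand, the rest of the theorem assembles as above.
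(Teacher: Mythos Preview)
Your reduction of Theorem~\ref{thm:main} to the finite-size criterion (Lemma~\ref{lem:stretchExponential}) matches the paper's proof closely: the paper also derives (v) by LSS domination of a finite-range renormalized edge process, (i) by Borel--Cantelli, and (ii) by exactly the observation you make---the connecting event lives in $\Lambda_{2n}$, so an additional $q$-subsampling with $q$ close to $1$ preserves it with high probability. One small point: for (iii) and (iv) you invoke Grimmett--Marstrand on the renormalized process, but this is unnecessary. The paper simply picks $p>p_c(\Z^2)$ in (v), so the dominated Bernoulli process already percolates in a two-dimensional half-plane (hence $Y$ percolates in a half-space and, for $d\ge3$, in a slab of thickness $O(n)$). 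Since the paper explicitly positions itself as seeking alternatives to Grimmett--Marstrand, avoiding it here is preferable.

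Where your proposal genuinely diverges is in the sketch for the criterion itself. You outline a multi-scale induction over scales $n_k$ with $\eps=\sum_{k\ge0}\eps_k$. The paper does something structurally different and sharper: it works at a \emph{single} scale $n$, contracts the $X$-clusters in $\Lambda_{8dn}$ into single vertices, and observes that because every such cluster crosses an annulus of width $\sqrt{n}$, the resulting multigraph is $\sqrt{n}$-edge-connected with at most $n^d$ vertices. A standalone combinatorial proposition (Proposition~\ref{prop:combinatoric}) then shows that an $\eps$-percolation on any $N$-edge-connected multigraph with $\le N^{2d}$ vertices is connected with probability $1-e^{-cN}$. The sprinkling is split only into finitely many pieces ($4d$ for the proposition, plus $2d$ concentric annuli to handle boundary clusters), not an infinite sum. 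This contraction-to-a-highly-connected-graph mechanism is the key idea and does not appear in your sketch; your multi-scale plan might be workable, but the cluster-counting step you flag as the crux is precisely what the $N$-connectivity argument sidesteps.
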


The precise signification of Item~(\ref{item:5}) is the following.
Define the percolation process $Y^{(n)}$ on $\Z^d$ by declaring an
edge $e=\{x,y\}\in \mathds E^d$ open if the vertex $2nx$ is
$Y$-connected to $2ny$ inside $n(x+y)+\Lambda_{2n}$.
Item~(\ref{item:5}) occurs if for every $p<1$, there exists $n\ge 1$
such that the process $Y^{(n)}$ dominates stochastically a
$p$-Bernoulli percolation (see \cite{ligget1997domination} for more
details on stochastic domination).

\noindent\textbf{Remarks.}

\textbf{a.} As in \cite{benjamini2000effect}, a straightforward extension of
our proof shows that an analogue of Theorem~\ref{thm:main} holds if we
only assume that $X$ is densely percolating. A subgraph $X$ is said to
be densely percolating if there exists $R$ such that every box
$2R.z+\Lambda_R$, $z\in \Z^2$, intersects an infinite component of
$X$. In this framework, Item~(\ref{item:1}) needs to be replaced by
uniqueness of the infinite cluster, and the definition of
renormalization in Item~(\ref{item:5}) has to be slightly adapted. 

\textbf{b.} Once we know that a rescaled version of $Y$ dominates
supercritical Bernoulli percolation, one could use the known results
for Bernoulli percolation to obtain other properties of $Y$ (see
\cite{grimmett1999}, Chapters 7 and 8).

% \textbf{c.} Theorem~\ref{thm:main} was
% proved in dimension $d=2$ in \cite{benjamini2000effect}, using planar
% arguments.

\subsection{Questions}
\label{sec:questions}

Let us begin by rewriting the question of \cite{benjamini2000effect}
that motivates the problem studied here.

\begin{question}
  \label{ques:1}
  Is there an invariant, finite energy percolation $X$ on $\Z^d$, which
a.s.\@ percolates and satisfies $p_c(X) = 1$?
\end{question}
An invariant percolation is a probability measure on the percolation
configuration that is invariant under the symmetries of $\Z^d$. The
finite energy property was considered in~\cite{newman1981infinite}. In
the sense of Lyons and Schramm~\cite{lyons1999indistinguishability},
it corresponds to insertion and deletion tolerance: given an edge $e$,
the conditional probability that $e$ is present (resp.\@ absent) given
the status of all the other edges is positive. Benjamini, Häggström
and Schramm~\cite{benjamini2000effect} showed that
Question~\ref{ques:1} has a positive answer if we replace the finite
energy condition by the insertion tolerance. They construct an
insertion tolerant invariant process $X$ (obtained by adding and
$\eps$-percolation to a well-chosen invariant percolation), that
percolates but satisfies $p_c(X)=1$.

Adding an $\eps$-percolation to a percolation process is an easy way
to build insertion tolerant processes. Of course, one may add a more
general process instead. Our proof of Theorem~\ref{thm:main} uses
strongly that our process was constructed by adding an
$\eps$-percolation. With Question~\ref{ques:1} in mind, it would be
interesting to understand the effect of adding a more general process.
We suggest the following question.
\begin{question}
  \label{ques:2}
  Let $X$ be a fixed everywhere percolating subgraph of $\Z^d$. Let
  $\eta$ be a percolation process such that $\eta\neq \emptyset$
  almost surely. Assume that $\eta$ is ergodic with respect to the
  translations and invariant with respect to the whole automorphisms
  of the grid. Which properties among (\ref{item:1}), (\ref{item:2}),
  (\ref{item:3}), (\ref{item:4}) and (\ref{item:5}) are satisfied by
  $Y:=X\cup\eta$?
\end{question}
Let us give a particular case. In $\Z^3$, consider a superposition of
two independent ergodic invariant spanning forests. It is a.s.\@
connected?

Another natural generalization of the problem treated in this paper is
to consider graphs other than the hypercubic lattice. In
\cite{benjamini2001uniform} it is shown that non-amenable graphs,
admit a spanning forest that stays disconnected after adding the edges
of an $\eps$-percolation, for $\eps$ sufficiently small. A
positive answer to the following question would show that, in the
context of transitive graphs with $p_c<1$, the existence of an
everywhere percolating disconnected subgraph that remains disconnected
after adding an $\eps$-sprinkling is equivalent to non-amenability.
\begin{question}
  \label{ques:3}
  Let $G$ be a transitive amenable graph with critical value for
  Bernoulli percolation satisfying $p_c(G)<1$. Let $X$ be an
  everywhere percolating subgraph of $G$, and let $Y = X\cup \omega$
  be obtained from $X$ by adding an $\eps$-percolation $\omega$. Is
  $Y$ connected almost surely?
\end{question}
As an intermediate step toward Question 3, one can start first with
transitive graphs of polynomial volume growth (a framework in which
our methods are more likely to be adapted).

Another perspective is to study the simple random walk on a
superposition of an everywhere percolating subgraph of the lattice and
an independent sprinkling (for example, verify diffusivity in this
framework). Adding the sprinkling can be viewed as homogenisation,
this suggests to study spaces of harmonic functions on this
environment (in the spirit of~\cite{benjamini2011disorder}).
\subsection{Organization of the paper}
\label{sec:organization-paper}

For the rest of the paper, \emph{we fix the values of $d\ge 2$ and
  $\eps>0$}. 

\noindent\textbf{Constants.} In the proof, we will introduce \defn{constants}, denoted
by $C_0,C_1,\ldots\ $ By convention, the constants are elements of
$(0,\infty)$, they may depend on $d$ and $\eps$, but never depend on
any other parameter of the model. In particular, they never depend on
the chosen everywhere percolating subgraph $X$, or the size of the box
$n$.

In Section~\ref{sec:perc-highly-conn}, we study the effect of an
$\eps$-percolation on a finite highly connected graph. More precisely
we consider a graph $G$ with $O(N^{2d})$ vertices that is
$N$-connected ($G$ is connected and remains connected if we erase any
set of $N$ edges). We show that an $\eps$-percolation on such a graph
is connected with high probability.

The main new ideas are presented in Section~\ref{sec:proof-theorem},
where we prove the following lemma. 
\begin{lemma}
  \label{lem:stretchExponential}
  Let $X$ be a fixed everywhere percolating subset of edges of $\Z^d$.
  Let $Y = X\cup \omega$ be obtained from $X$ by adding an
  $\eps$-percolation $\omega$. For every $n\ge 1$, we have
 \begin{equation}
   \label{eq:1}
   \P{\text{For all $x,y\in \Lambda_n$, $x$ is $Y$-connected to $y$
       inside $\Lambda_{2n}$}}\ge 1-C_3e^{-C_1 \sqrt n}  
 \end{equation}
\end{lemma}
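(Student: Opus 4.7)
The plan is to reduce Lemma~\ref{lem:stretchExponential} to the result of Section~\ref{sec:perc-highly-conn}, which asserts that an $\eps$-sprinkling on an $N$-edge-connected graph with $O(N^{2d})$ vertices is connected with probability at least $1-Ce^{-C'N}$. Set $N=\lfloor c_0\sqrt n\rfloor$. The goal is to produce, using $X$ together with only a small independent portion of $\omega$, a subgraph $H\subseteq\Lambda_{2n}$ that contains $\Lambda_n$, has $O(n^d)=O(N^{2d})$ vertices, and is $N$-edge-connected. The remaining independent sprinkling will then connect $H$ via Section~\ref{sec:perc-highly-conn}, and since $\Lambda_n\subseteq V(H)\subseteq\Lambda_{2n}$ and $H\cup\omega\subseteq Y$, this forces every pair of vertices of $\Lambda_n$ to be $Y$-connected inside $\Lambda_{2n}$.

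To build $H$, I would first split $\omega=\omega_1\cup\omega_2$ into two independent sprinklings, each of constant density (using the identity $(1-\eps)=(1-\eps_1)(1-\eps_2)$). For every $z\in\Lambda_n$, the everywhere-percolation hypothesis allows me to pick a semi-infinite simple $X$-path $\gamma_z$ starting at $z$; truncating $\gamma_z$ at its first exit of $\Lambda_{2n}$ gives a simple path $\pi_z\subset X\cap\Lambda_{2n}$ of length at least $n$. I would then take $H$ to be $\bigcup_{z\in\Lambda_n}\pi_z$ augmented by all $\omega_1$-edges incident to it. Because $\omega_2$ is independent of $(X,\omega_1)$, and hence of $H$, applying Section~\ref{sec:perc-highly-conn} to $H$ with sprinkling $\omega_2$ yields $H\cup\omega_2$ connected with the required probability.

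The main obstacle is showing that $\omega_1$ turns the union $\bigcup_z\pi_z$ into an $N$-edge-connected graph with probability at least $1-Ce^{-C'\sqrt n}$. The sprinkling step is genuinely required here: when $X$ is a spanning tree, $\bigcup_z\pi_z$ is itself a tree, hence only $1$-edge-connected, so no purely deterministic choice of $H$ inside $X\cap\Lambda_{2n}$ can be $N$-connected. My strategy is a renormalization at scale $\sqrt n$: partition $\Lambda_{2n}$ into sub-boxes of side of order $\sqrt n$, and inside each sub-box met by some $\pi_z$, use the $\omega_1$-edges to argue that the local segments of all long paths crossing that sub-box merge into a single cluster with constant probability. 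An edge-cut of $H$ of size below $N=c_0\sqrt n$ would then force a number of local merging failures proportional to $\sqrt n$, in sub-boxes that can be rendered independent via a standard colouring of $\Z^d$, and the stretched exponential $e^{-C'\sqrt n}$ follows from these roughly $\sqrt n$ independent Bernoulli trials.

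Quantifying the local merging probability is the delicate combinatorial point: one must exploit that a long $X$-path entering a sub-box of side $\sqrt n$ leaves at least order $\sqrt n$ vertices inside it, hence exposes a linear-in-$\sqrt n$ family of candidate $\omega_1$-edges to neighbouring path segments. The fact that $X$ is everywhere percolating is used not only to produce each $\pi_z$ but also to guarantee that these segments are plentiful and geometrically transversal, so that the $\omega_1$-sprinkling has enough independent chances to glue them.
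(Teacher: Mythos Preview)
Your proposal contains a basic obstruction that cannot be repaired within the framework you set up. You want to build a subgraph $H$ of $\Lambda_{2n}$ (so $V(H)\subseteq\Lambda_{2n}$ and $E(H)\subseteq\mathds E^d$) that is $N$-edge-connected with $N=\lfloor c_0\sqrt n\rfloor$. But every vertex of $\Z^d$ has degree $2d$, so any subgraph of $\Z^d$ has edge-connectivity at most $2d$: isolating a single vertex of $H$ already produces a cut of size $\le 2d$. Hence for $n$ large no outcome of $\omega_1$ can make $H$ be $N$-edge-connected, and Proposition~\ref{prop:combinatoric} cannot be invoked as you describe. Your sub-box renormalization sketch does not escape this, since the conclusion you aim for (``$H$ is $N$-edge-connected'') is impossible regardless of how the local merging events behave.

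The paper's proof gets around the bounded-degree barrier by applying Proposition~\ref{prop:combinatoric} not to a subgraph of $\Z^d$ but to a \emph{contracted} multigraph: the connected components of $X$ inside a large box $\Lambda_{8dn}$ are each collapsed to a single vertex. This contracted graph has at most $n^d=(\sqrt n)^{2d}$ vertices (each component reaches the boundary of $\Lambda_{8dn}$), and it is $\sqrt n$-connected because any component meeting $\Lambda_m$ must cross an annulus of width $\sqrt n$, hence carries at least $\sqrt n$ boundary edges. Contraction is precisely what allows high edge-connectivity despite the lattice having bounded degree. The genuine subtlety---which your outline does not touch---is that some $X$-clusters meet an outer annulus without reaching $\Lambda_m$ and therefore need not cross a $\sqrt n$-wide shell; the paper controls these ``boundary-clusters'' via the counts $U_{a,b}(X)$, a pigeonhole over nested annuli, and an iteration through $2d$ disjoint shells (Lemmas~\ref{lem:annulusToBulk} and~\ref{lem:annulusShrink}).
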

Let us give the strategy used to prove
Lemma~\ref{lem:stretchExponential}. The restriction of an everywhere
percolating subgraph of $\Z^d$ to the finite box $\Lambda_n$ gives a
partition of $\Lambda_n$ into finite clusters. If we forget about
possible small clusters at the boundary of $\Lambda_n$ and assume that
all the finite clusters partitioning $\Lambda_n$ intersect the box
$\Lambda_{n-\sqrt n}$, then contracting these clusters result in a
highly connected graph. Indeed, a non-trivial union of clusters must
``cross'' the annulus $\Lambda_n\setminus \Lambda_{n-\sqrt{n}}$, which
implies that its boundary must have size at least $\sqrt n$. We can
thus apply the result of Section~\ref{sec:perc-highly-conn}. The main
difficulty is to treat carefully the small clusters at the boundary of
$\Lambda_n$.

In Section~\ref{sec:theor-impl-theor}, we deduce
Theorem~\ref{thm:main} from Lemma~\ref{lem:stretchExponential}. That
section uses standard renormalization and stochastic domination tools,
which were already used by Benjamini, Häggström and Schramm
\cite{benjamini2000effect} to treat the case $d=2$.

\subsection{Notation, definitions}
\label{sec:notation-definitions}

Let $(\Z^d,\mathds E^d)$, $d\ge2$, be the standard $d$-dimensional hypercubic
lattice. For $r,R>0$ (not necessarily integer), we write
$\Lambda_r:=[-r,r]^d\cap \Z^d$ and $A_{r,R}:=\Lambda_{R}\setminus
\Lambda_{r}$. For $z\in \Z^d$, we denote by
$\Lambda_r(z):=z+\Lambda_r$ the translate of $\Lambda_r$ by vector~$z$.

In this paper, we call \defn{subgraph of $\Z^d$} a set of edges
$X\subset \mathds E^d$, and identify it to the graph $(\Z^d,X)$.
(Notice that the vertex set of every subgraph of $\Z^d$ considered
here will always be the entirety of $\Z^d$.) We say that two vertices
$x,y\in \Z^d$ are \defn{$X$-connected}, if there exists a sequence of
disjoint vertices $x_1,\ldots,x_\ell \in \Z^d$, such that $x_1=x$,
$x_\ell=y$, and for every $1\le i<\ell$ the edge $\{x_i,x_{i+1}\}$
belongs to $X$. We say that $x$ and $y$ are \defn{$X$-connected inside
  $S\subset \Z^d$} if, in addition, all the $x_i$'s belong to $S$.

A subgraph $X$ is said to be \defn{everywhere percolating} if all its
connected components are infinite. In other words, a subgraph is
everywhere percolating if for every vertex $x$ in $\Z^d$, $x$ is
$X$-connected to infinity. We say that $X$ \defn{percolates in
  $S\subset\Z^d$} if the graph induced by $X$ on $S$ contains an
infinite connected component.

We call \defn{$p$-Bernoulli percolation} (or simply
\defn{$p$-percolation}) the random subgraph $\omega$ of $\Z^d$,
constructed as follows: each edge of $\mathds E^d$ is examined
independently of the other and is declared to be an element of
$\omega$ with probability $p$.

\section{Percolation on a highly connected finite graph}
\label{sec:perc-highly-conn}

In this section, we show that an $\eps$-percolation on any
$N$-connected finite graph with $O(N^{2d})$ vertices connects all the
vertices with large probability. Let us begin with the definitions
needed to state the result. A finite \defn{multigraph} is given by a
pair $G=(V,E)$: $V$ is a finite set of vertices, and $E$ is a multiset
of pairs of unordered vertices (multiple edges and loops are allowed).
We work with multigraphs rather than standard graphs because we will
consider multigraphs obtained from other graphs by \emph{contraction},
and we want to keep track of the multiple edges created by the
contraction procedure. A multigraph $G=(V,E)$ is said to be
\defn{$N$-connected}\footnote{In graph theory, the terminology
  ``$N$-edge-connected'' is used in this case to distinguish with
  vertex-connectivity.} if it is connected, and removing any set of
$N$ edges does not disconnect it. In other words, for any subset
$F\subset E$ such that $|F|\le N$, the graph $(V,E\setminus F)$ is
connected. An $\eps$-percolation on $G$ is defined equivalently as on
$\Z^d$: it is a random set of edges $\omega\subset E$ such that the
events $\{e\in \omega\}$ are independent, each of them having
probability equal to $\eps$.

\begin{proposition}
   \label{prop:combinatoric}
   Let $N\in\N$. Let $G=(V,E)$ be an $N$-connected multigraph with
   \mbox{$|V|\le N^{2d}$}. Let $\omega \subset E$ be an
   $\eps$-percolation on $G$. Then
   \begin{equation*}
      \P{\text{The graph $(V,\omega)$ is connected}}\ge1- C_2e^{-C_1 N}.
   \end{equation*}
\end{proposition}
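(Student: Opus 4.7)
The plan is a union bound over all edge cuts of $G$, combined with Karger's cut-counting theorem to control the combinatorial explosion. The graph $(V,\omega)$ is disconnected if and only if there exists a non-empty proper subset $A\subsetneq V$ such that every edge of the cut $E(A,V\setminus A)$ is absent from $\omega$. Grouping unordered bipartitions $\{A,V\setminus A\}$ by their cut size $k$ and writing $\mathcal N_k$ for the number of such bipartitions with exactly $k$ edges across, independence of the edges in $\omega$ yields
\[
\P{(V,\omega)\ \text{is disconnected}} \;\le\; \sum_{k\ge N} \mathcal N_k\,(1-\eps)^k,
\]
where the sum starts at $k=N$ because $N$-edge-connectivity forces every non-trivial cut of $G$ to have at least $N$ edges.

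The key input is Karger's theorem on approximate minimum cuts: in any multigraph with $n$ vertices and minimum cut size $c$, the number of cuts of size at most $\alpha c$ is bounded by $C\,n^{2\alpha}$ for every real $\alpha\ge 1$, where $C$ is a universal constant. Applying this with $c=N$ and $\alpha=k/N$, and using the hypothesis $|V|\le N^{2d}$, one obtains $\mathcal N_k \le C\,|V|^{2k/N}\le C\,N^{4dk/N}$. Substituting into the union bound gives
\[
\P{(V,\omega)\ \text{is disconnected}} \;\le\; C\sum_{k\ge N}\bigl(N^{4d/N}(1-\eps)\bigr)^k.
\]

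Since $N^{4d/N}\to 1$ as $N\to\infty$, there exists $N_0=N_0(\eps,d)$ such that $N^{4d/N}(1-\eps)\le 1-\eps/2$ for every $N\ge N_0$. For such $N$ the geometric series is bounded by $\frac{2}{\eps}(1-\eps/2)^N$, which is of the desired form $C_2\,e^{-C_1 N}$ with $C_1=\lvert\log(1-\eps/2)\rvert$; the finitely many small values $N<N_0$ are absorbed by enlarging $C_2$.

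The main obstacle is the invocation of Karger's cut-counting bound — the whole argument rests on this \emph{polynomial} (rather than $2^{|V|}$) control on the number of small cuts, since a naive union bound over all subsets of $V$ would be hopeless against $|V|\le N^{2d}$. If a self-contained treatment is preferred, the needed estimate can be derived via Karger's random edge-contraction algorithm, which outputs any prescribed cut of size $k$ with probability at least a constant times $|V|^{-2k/N}$; summing these disjoint output probabilities yields the claimed bound on $\mathcal N_k$.
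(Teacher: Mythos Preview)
Your proof is correct but follows a genuinely different route from the paper's. The paper gives a self-contained, elementary argument: it splits the $\eps$-percolation into $4d$ independent $(\eps/4d)$-percolations and shows (its Lemma~2.1) that each round of sprinkling, with probability at least $1-C_0e^{-C_1N}$, either connects everything or divides the number of connected components by at least $\sqrt{N}$; starting from at most $N^{2d}$ components, $4d$ rounds suffice. The union bound in each round is only over unions of fewer than $\sqrt N$ current components (at most $\sqrt N\cdot N^{2d\sqrt N}$ sets), which the factor $(1-\eps/4d)^N$ coming from $N$-edge-connectivity easily beats. Your argument instead does a single global union bound over all cuts and uses Karger's approximate-min-cut count to control the combinatorics; this is shorter and more direct, at the cost of importing a non-trivial external theorem. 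One small caveat: the cleanest statements of Karger's bound are for half-integer $\alpha$, and rounding up may cost an extra factor of $|V|$, but an extra polynomial in $N$ is harmless for your geometric-series estimate. Conceptually, the paper's ``sprinkle in stages'' device is not just a local trick---the same shrink-by-$\sqrt N$ mechanism is reused across nested annuli in Section~3---whereas your approach isolates the proposition as a clean corollary of cut-counting.
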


We begin with the following lemma, which says that adding an
$(\eps/4d)$-percolation on a subgraph of $G$ either connects all the
vertices or shrinks the number of connected components by a factor
smaller than $1/\sqrt{N}$ (with large probability). We will then prove
the proposition by applying this lemma $4d$ times. Given $X\subset E$,
we write $K(X)$ for the number of connected components in the graph
$(V,X)$.

\begin{lemma}
  \label{lem:shrink}
   Let $\omega_1$ be an $(\eps/4d)$-percolation on
  $G$. For every $X \subset E$, we have 
  \begin{equation}
    \label{eq:2}
    \P{K(X\cup \omega_1)> 1\vee \frac{ K(X)}{\sqrt N} }\le C_0 e^{-C_1N},
  \end{equation}
\end{lemma}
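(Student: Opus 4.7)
The plan is to reduce to the case $X=\emptyset$ by contracting each $X$-component of $G$, and then apply a Peierls-type union bound driven by the $N$-edge-connectivity of the contracted graph.

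First I would contract each connected component of $(V,X)$ to a single vertex to obtain a multigraph $G^X=(V^X,E^X)$, discarding the self-loops coming from edges of $X$. Since edge-contraction never decreases edge-connectivity (any cut in $G^X$ lifts to a cut in $G$ with the same number of crossing edges), $G^X$ is still $N$-connected, and $|V^X|=K(X)\le|V|\le N^{2d}$. The non-loop edges of $E^X$ are naturally in bijection with $E\setminus X$, and under this identification $\omega_1\cap(E\setminus X)$ is an $(\eps/4d)$-percolation on $G^X$ whose connected components correspond bijectively to the connected components of $(V,X\cup\omega_1)$. It therefore suffices to prove that, for every $N$-connected multigraph $H$ on $k\le N^{2d}$ vertices equipped with an $(\eps/4d)$-percolation $\omega_1$, one has $\P{K(\omega_1)>1\vee k/\sqrt N}\le C_0 e^{-C_1 N}$.

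Second comes the main observation, a pigeonhole. On the event $\{K(\omega_1)>1\vee k/\sqrt N\}$ we have $K(\omega_1)\ge 2$ and the average component size $k/K(\omega_1)$ is strictly less than $\sqrt N$, so some connected component $S$ of $(V,\omega_1)$ satisfies $1\le|S|\le\sqrt N$ and $S\ne V$. The bad event is thus contained in the union, over all $S\subsetneq V$ with $1\le|S|\le\sqrt N$, of the event that no edge of $\omega_1$ joins $S$ to $V\setminus S$. For any such $S$, the $N$-connectivity of $H$ forces $|\partial_H S|\ge N$, so the probability that the cut is entirely avoided by $\omega_1$ is at most $(1-\eps/(4d))^N\le e^{-\eps N/(4d)}$.

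Finally, I apply a union bound. The number of sets $S\subset V$ with $|S|\le\sqrt N$ is at most $\sqrt N\binom{k}{\sqrt N}\le\sqrt N\cdot N^{2d\sqrt N}=\exp\bigl(O(\sqrt N\log N)\bigr)$, and combining with the per-set bound gives
\begin{equation*}
\P{K(\omega_1)>1\vee k/\sqrt N}\le\exp\bigl(O(\sqrt N\log N)-\tfrac{\eps}{4d}N\bigr)\le C_0 e^{-C_1 N}
\end{equation*}
for suitable $C_0,C_1>0$ depending only on $d$ and $\eps$ (the small values of $N$ being absorbed by inflating $C_0$). The main obstacle I anticipate is the reduction step: one must verify cleanly that the contraction preserves $N$-edge-connectivity and that the restriction of $\omega_1$ really is an $(\eps/4d)$-percolation on the resulting multigraph (which is why multigraphs, rather than simple graphs, are needed in the statement of Proposition~\ref{prop:combinatoric}). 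Once the reduction is in place, the rest is a standard Peierls argument, and the threshold $\sqrt N$ is essentially forced by the need for $N$ to dominate the entropy $O(\sqrt N\log N)$ of candidate small components in a graph of size at most $N^{2d}$.
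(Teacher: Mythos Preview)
Your proof is correct and follows essentially the same approach as the paper: the paper's ``sets generated by $X$'' with $\mathsf m(S)<\sqrt N$ are exactly the subsets of size $<\sqrt N$ in your contracted graph $G^X$, and the Peierls bound and entropy count coincide line for line. One small imprecision worth fixing: the non-loop edges of $G^X$ are in bijection with the edges of $E$ joining \emph{distinct} $X$-components, which may be a proper subset of $E\setminus X$, but this does not affect the argument.
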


\begin{proof}
  We can assume that $K(X) >1$ (if $K(X)=1$, then
  Equation~\eqref{eq:2} is trivially true). We say that a set of
  vertices $S\subset V$ \defn{generated by $X$}, if it can be exactly
  written as a disjoint union
  \begin{equation}
    S= S_1 \cup\cdots\cup S_m,\label{eq:3}
  \end{equation}
  where $S_1,\ldots,S_m$ are disjoint connected components of $X$. For
  such a set $S$, we define $\mathsf m (S):=m$ as the number of
  connected components in the decomposition \eqref{eq:3}. Notice that
  every non-empty set generated by $X$ satisfies $1\le \mathsf m
  (S)\le K(X)$. The case $\mathsf m (S)=1$ corresponds to $S$ being a
  single connected component of $(V,X)$, and $\mathsf m (S)=K(X)$
  corresponds to $S=V$.
  
  If $K(X\cup \omega_1)> 1\vee K(X)/\sqrt N$, then there must exist a
  connected component $S\subsetneq V$ of $X\cup \omega_1$ that
  satisfies $1\le \mathsf m(S) <\sqrt N$. By the union bound, we
  obtain
  \begin{equation}
    \label{eq:4}
    \P{K(X\cup \omega_1)> 1\vee \frac{ K(X)}{\sqrt N} }\le
   \!\!\! \sum_{\substack{S\subsetneq V\\
        1\le \mathsf m(S)<\sqrt N}} \!\!\!\P{\text{$S$ is a connected component of $X \cup \omega_1$}}.
  \end{equation}
  Now, if a non-empty set $S\subsetneq V$ is a connected component of $X
  \cup \omega_1$, then all the edges connecting a vertex of $S$ to a
  vertex in $V\setminus S$ must be $\omega_1$-closed. Since the graph
  $G$ is $N$-connected there are at least $N$ such edges, and we obtain
  \begin{equation}
    \label{eq:5}
    \P{\text{$S$ is a connected component of $X \cup
        \omega_1$}}\le (1-\eps/4d)^N.
  \end{equation}
  Plugging Equation~\eqref{eq:5} in \eqref{eq:4}, we find
  \begin{align*}
    \P{K(X\cup \omega_1)> 1\vee \frac{ K(X)}{\sqrt N} }&\le
    |\{S\::\:  1\le \mathsf m(S)<\sqrt N\}|\:  (1-\eps/4d)^N\\
    &\big. \le  \sum_{1\le k< \sqrt N} \binom {K(X)} k \:  (1-\eps/4d)^N\\
    &\bigg. \le  \sqrt N  N^{2d\sqrt N} (1-\eps/4d)^N\\
    &\bigg.\le C_0 e^{-C_1N}.
  \end{align*}
  In the third line we use that $K(X)\le |V| \le N^{2d}$ to bound the
  binomial coefficient by $N^{2d\sqrt N}$.
\end{proof}

\begin{proof}[Proof of Proposition~\ref{prop:combinatoric}]
  Let $\omega_1,\ldots,\omega_{4d}$ be $4d$ independent
  $(\eps/4d)$-percolations on $G$, set $\eta_0=\emptyset$ and
  $\eta_k:=\omega_1\cup\ldots\cup\omega_k$. By Lemma~\ref{lem:shrink} we have,
  for all $1\le k\le 4d$,
  \begin{equation*}
    \P{K(\eta_k)> 1\vee \big(K(\eta_{k-1})/\sqrt N\big) }\le C_0 e^{-C_1N}.
  \end{equation*}
  Since $K(\eta_0)=|V|\le N^{2d}$, we find by induction 
  \begin{equation*}
         \P{K(\eta_{k})> N^{2d-k/2}}\le  kC_0 e^{-C_1N}.
  \end{equation*}
  Setting $k=4d$ in the equation above, we obtain that $\eta_{4d}$ is
  connected with probability larger than $1-C_2 e^{-C_1N}$. This
  concludes the proof, since $\eta_{4d}$ is stochastically dominated
  by an $\eps$-percolation.
\end{proof}

\section{Proof of Lemma~\ref{lem:stretchExponential}}
\label{sec:proof-theorem}
Let $X$ be an everywhere percolating subgraph of $\Z^d$, let $n\ge 1$.
We define $\cal C_{8dn}(X)$ as the set of connected components for the
graph induced by $X$ on the box $\Lambda_{8dn}$ (two vertices are in
the same connected component if they are $X$-connected inside
$\Lambda_{8dn}$). Fix $n_0$ such that, for every $n\ge n_0$, the size
the boundary of $\Lambda_{8dn}$ is smaller than $n^d$. (We call
boundary of $\Lambda_n$ the set $\Lambda_{n}\setminus\Lambda_{n-1}$).
Notice that $n_0$ depends only on the dimension $d$. Since any element
of $\cal C_{8dn}(X)$ contains at least a vertex at the boundary of
$\Lambda_{8dn}$, we also have, for every $n\ge n_0$,
\begin{equation}
|\cal C_{8dn}(X)|\le n^d.\label{eq:6}
\end{equation}
For $0< a\le b\le 8dn$, define $U_{a,b}(X)$ as the number of sets
$C\in \mathcal C_{8dn}(X)$ such that $C\cap \Lambda_a=\emptyset$ and
$C\cap \Lambda_b\neq\emptyset$. In other words,
\begin{equation*}
   U_{a,b}(X)=\mathrm{Card}\{C\in \Lambda_{8dn} \::\: 
  a< \mathrm d(0,C)\le b\},
\end{equation*}
where $\mathrm d(0,C)$ denotes the $L^\infty$-distance between the
origin and the set $C$. Define $U_{0,b}$ as the number of sets $C\in
\mathcal C_{8dn}(X)$ such that $C\cap\Lambda_b\neq\emptyset$. That is
\begin{equation*}
  U_{0,b}(X)=\mathrm{Card}\{C\in \Lambda_{8dn} \::\: \mathrm d(0,C)\le b\}.
\end{equation*}
Notice that $U_{a,b}(X)$ depends on $n$, but we keep this dependence
implicit to lighten the notation.

\begin{lemma}
  \label{lem:annulusToBulk}
  Fix an everywhere percolating subgraph $X$ of $\Z^d$. Let $n_0\le
  n\le m\le m+\sqrt n\le 8dn$. Let $\omega$ be an $\eps$-percolation
  restricted to $A_{m,m+2\sqrt n}$, then
  \begin{equation*}
     \P{U_{0,m}(X\cup \omega) > 1\vee U_{m,m+2\sqrt n}(X)} \le C_2
    e^{-C_1\sqrt n}.   
  \end{equation*}
\end{lemma}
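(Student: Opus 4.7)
The plan is to reduce the lemma to Proposition~\ref{prop:combinatoric} by contracting the $X$-clusters in $\Lambda_{8dn}$ into super-vertices of a multigraph on which $\omega$ acts as an $\eps$-percolation. First I would partition $\mathcal C_{8dn}(X)$ into three classes: $\mathcal I$, the clusters meeting $\Lambda_m$ (so $|\mathcal I|=U_{0,m}(X)$); $\mathcal A$, the $U_{m,m+2\sqrt n}(X)$ clusters counted by the target; and $\mathcal O$, the clusters avoiding $\Lambda_{m+2\sqrt n}$. Since $\omega$ is supported in the annulus, clusters in $\mathcal O$ cannot be merged by $\omega$, so $U_{0,m}(X\cup\omega)$ equals the number of $(X\cup\omega)$-equivalence classes on $\mathcal I\cup\mathcal A$ that meet $\mathcal I$.

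Next I would introduce the multigraph $G^*$ with vertex set $\mathcal I\cup\{a^*\}$, where $a^*$ is obtained by contracting all of $\mathcal A$ to a single super-vertex, and with one edge for every $\{u,v\}\in \mathds E^d$ such that $u,v\in A_{m,m+2\sqrt n}$ lie in two distinct clusters of $\mathcal I\cup\mathcal A$. The restriction of $\omega$ to the annulus is then exactly an $\eps$-percolation on $E(G^*)$. The key reduction is: if all vertices of $\mathcal I$ belong to one single connected component of $G^*[\omega]$, then $U_{0,m}(X\cup\omega)\le \max(1,|\mathcal A|)$. Indeed, either that common component also contains $a^*$, in which case every $\mathcal I$-cluster belongs to an $(X\cup\omega)$-class meeting $\mathcal A$ and the total number of such classes is at most $|\mathcal A|$; or $a^*$ sits in a separate component and $\mathcal I$ forms a single pure $(X\cup\omega)$-class, contributing one.

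To estimate the probability of this event I would establish the cut bound: for every bipartition $(V_1,V_2)$ of $\mathcal I\cup\{a^*\}$ with both $V_1\cap\mathcal I$ and $V_2\cap\mathcal I$ non-empty, $|E_{G^*}(V_1,V_2)|\ge 2\sqrt n$. The everywhere-percolating hypothesis forces every $C\in\mathcal I$ to contain an $X$-path in $\Lambda_{8dn}$ from $\Lambda_m$ to $\partial\Lambda_{8dn}$, hence $C$ meets every layer $L_j=\{x\in\Z^d:\|x\|_\infty=m+j\}$ for $j=1,\ldots,2\sqrt n$. Consequently both sides of the bipartition have annulus-vertices in each $L_j$; as each layer is a connected subgraph of $\Z^d$, the cut must include at least one edge per layer. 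Together with $|V(G^*)|\le n^d+1\le(2\sqrt n)^{2d}$, this sets up the hypotheses of Proposition~\ref{prop:combinatoric} with $N=2\sqrt n$.

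The main obstacle is this last step: Proposition~\ref{prop:combinatoric} does not apply verbatim because the bipartition $(\mathcal I,\{a^*\})$ of $G^*$ escapes the $2\sqrt n$ cut bound (a single $\mathcal A$-cluster may have very few vertices in the annulus). However, this exceptional bipartition is harmless, since a $G^*[\omega]$-component equal to $\mathcal I$ already yields $U_{0,m}(X\cup\omega)\le 1$. I would therefore rerun the proof of Lemma~\ref{lem:shrink} with the union bound restricted to candidate components $\Gamma$ satisfying $\emptyset\ne\Gamma\cap\mathcal I\subsetneq\mathcal I$, for which the cut in $G^*$ is $\ge 2\sqrt n$ and hence the probability of $\Gamma$ being a component of $G^*[\omega]$ is $\le(1-\eps/4d)^{2\sqrt n}$. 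The counting $|V(G^*)|^{\sqrt N}\le N^{2d\sqrt N}$ from the original argument remains valid, so iterating the shrink step $4d$ times as in Proposition~\ref{prop:combinatoric} produces the bound $1-C_2e^{-C_1\sqrt n}$ claimed in the lemma.
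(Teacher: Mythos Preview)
Your approach is correct but follows a different route from the paper's. The paper sidesteps your ``main obstacle'' entirely by a case distinction on whether $U_{m,m+\sqrt n}(X)=0$. If it is zero, every cluster meeting $\Lambda_{m+\sqrt n}$ already reaches $\Lambda_m$, so contracting clusters in $\Lambda_{m+\sqrt n}$ yields a genuinely $\sqrt n$-connected multigraph and Proposition~\ref{prop:combinatoric} applies as a black box to give $U_{0,m}(X\cup\omega')\le 1$. If it is positive, then at least one boundary-cluster reaches into $\Lambda_{m+\sqrt n}$; since $X$ is everywhere percolating, this cluster (and hence the merged super-vertex $\tilde C$) must cross the \emph{outer} sub-annulus $A_{m+\sqrt n,\,m+2\sqrt n}$, and so do all the bulk-clusters. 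The contracted graph is therefore $\sqrt n$-connected with no exceptional cut, and Proposition~\ref{prop:combinatoric} again applies directly. Your construction skips the case split and works with a single multigraph $G^*$, at the price of having one cut $(\mathcal I,\{a^*\})$ that need not be large. You then correctly observe that this cut is harmless for the target event and propose to reopen the proof of Lemma~\ref{lem:shrink}; this does work, though to make the iteration clean one should track the count $K_{\mathcal I}$ of components meeting $\mathcal I$ rather than $K$, and note that since $a^*$ is the only non-$\mathcal I$ vertex, $\mathsf m(S)\le \mathsf m_{\mathcal I}(S)+1$, so the pigeonhole and counting steps go through with only cosmetic adjustments. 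In short, the paper's two-case argument buys a clean black-box application of Proposition~\ref{prop:combinatoric}; your single-graph argument trades that split for a modest reworking of the proposition's proof.
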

\noindent(In the statement of the lemma, the \defn{$\eps$-percolation restricted
  to $A_{m,m+2\sqrt n}$} is defined by $\omega=\eta\cap A_{m,m+2\sqrt
  n}$, where $\eta$ is an $\eps$-percolation in $\Z^d$.)

\begin{proof}
  Let us first assume $U_{m,m+\sqrt n}(X)=0$. In this case, the proof
  is easier and we directly show that
  \begin{equation}
    \label{eq:7}
    \P{U_{0,m}(X\vee\omega')>1}\le  C_2e^{-C_1\sqrt n}, 
  \end{equation}
  where $\omega'=\omega \cap A_{m,m+\sqrt n}$ is the restriction of
  $\omega$ to the annulus $A_{m,m+\sqrt n}$. 

  \noindent Let $G$ be the graph obtained from $\Lambda_{m+\sqrt n}$ by the
  following contraction procedure.
  \begin{enumerate}
  \item Start with $\Lambda_{m+\sqrt n}$, with the standard graph structure
    induced by $\Z^d$.
  \item Examine the elements of $\cal C_{8dn}(X)$ one after the other. For
    every $C\in \cal C_{8dn}(X)$, contract all the
    points $x\in C\cap \Lambda_{m+\sqrt n}$ into one vertex.
  \end{enumerate}
  Since $U_{m,m+\sqrt n}(X)=0$, the graph $G$ has exactly $U_{0,m}(X)$
  vertices, and $U_{0,m}(X\vee\omega')$ corresponds the number of
  connected components resulting from an $\eps$-percolation on $G$.
  Therefore, Equation~\eqref{eq:7} follows from
  Proposition~\ref{prop:combinatoric}, applied to $G$ with $N=\sqrt
  n$. Indeed, the graph $G$ has at most $(\sqrt n)^{2d}$ vertices by
  \eqref{eq:6}, and is $\sqrt n$-connected. To see that $G$ is $\sqrt
  n$-connected, observe that any non-trivial union of elements of
  $\mathcal C_{8dn}(X)$ that intersect $\Lambda_{m+\sqrt n}$ must ``cross''
  the annulus $A_{m,m+\sqrt n}$ (this follows from the hypothesis $U_{m,m+\sqrt n}(X)=0$).
  
  We now turn to the case $U_{m,m+\sqrt n}(X)> 0$, in  which we show
  \begin{equation}
    \label{eq:8}
    \P{U_{0,m}(X\cup \omega)\le U_{m,m+2\sqrt n}(X)} \ge 1-C_2e^{C_1\sqrt n}.
  \end{equation}
  The strategy is very similar to the one used in the case
  $U_{m,m+\sqrt n}(X)=0$, except that here we need to consider
  carefully the clusters that intersect the annulus $A_{m,m+2\sqrt n}$
  but not the box $\Lambda_m$. More precisely, we partition the
  elements of $\cal C_{8dn}(X)$ \emph{intersecting the box
    $\Lambda_{m+2\sqrt n}$}, into the following two types:
  \begin{itemize}
  \item the \defn{bulk-clusters}, defined as the elements  $C\in\cal
    C_{8dn}(X)$ such that $C\cap \Lambda_m\neq \emptyset$;
  \item the \defn{boundary-clusters}, defined as the elements  $C\in\cal
    C_{8dn}(X)$ such that $C\cap \Lambda_m=\emptyset$.
  \end{itemize}
  Notice that
  \begin{equation*}
        U_{0,m+2\sqrt n}(X)=U_{0,m}(X)+U_{m,m+2\sqrt n}(X),
  \end{equation*}
  where $U_{0,m}(X)$ counts the bulk-clusters, and $U_{m,m+2\sqrt
    n}(X)$ counts the boundary-clusters.

  Define $\tilde C$ as the union of all the boundary-clusters. The
  hypothesis $U_{m,m+\sqrt n}(X)> 0$ implies that at least one
  boundary-cluster intersects the annulus $A_{m,m+\sqrt n}$. Thus, we
  have
  \begin{equation*}
       \tilde C \cap \Lambda_{m+\sqrt n} \neq \emptyset.
  \end{equation*}
  We now construct a $\sqrt n$-connected  graph $G$ by the following
  contraction procedure.
   \begin{enumerate}
  \item Start with $\Lambda_{m+2 \sqrt n}$, with the graph structure
    induced by $\Z^d$.
  \item For every bulk-cluster $C\in \cal C_{8dn}(X)$, contract all
    the points $x\in C\cap \Lambda_{m+2\sqrt n}$ into one vertex.
  \item Contract all the vertices $x$ that belong
    to $\tilde C \cap \Lambda_{m+2\sqrt n}$  into one vertex. 
  \end{enumerate}
  To see that the graph is $\sqrt n$-connected, observe that all the
  bulk-clusters and $\tilde C$ cross the annulus $A_{m+\sqrt n,
    m+2\sqrt n}$. Proposition~\ref{prop:combinatoric} applied with
  $N=\sqrt n$, ensures than an $\eps$-percolation on $G$ connects all
  its vertices with probability larger than $1-C_2e^{-C_1\sqrt n}$. This
 proves Equation~\eqref{eq:8}, because the $\eps$-percolation $\omega$ can be
  interpreted as an $\eps$-percolation on $G$, except that the
  $U_{m,m+2\sqrt n}(X)$ boundary-clusters were ``artificially'' merged
  into one point in the construction of $G$.
\end{proof}

\begin{lemma}
\label{lem:annulusShrink}
Fix an everywhere percolating subgraph $X$ of $\Z^d$. Let $n_0\le n\le
m\le m+ 2n\le 8dn$. Let $\omega $ be an $\eps$-percolation restricted
to the annulus $A_{m,m+2n}$, then
  \begin{equation}
    \label{eq:9}
    \P{U_{0,m}(X\cup \omega) > 1\vee \frac{U_{0,m+2n}(X)}{\sqrt n}} \le C_2
    e^{-C_1\sqrt n}.   
  \end{equation}
\end{lemma}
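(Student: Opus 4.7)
The plan is to reduce Lemma~\ref{lem:annulusShrink} to Lemma~\ref{lem:annulusToBulk} via pigeonhole. Lemma~\ref{lem:annulusToBulk} collapses the clusters inside $\Lambda_m$ down to the count of $X$-clusters in a thin annulus just outside $\Lambda_m$, so if we can locate somewhere in the wide annulus $A_{m,m+2n}$ a thin sub-annulus containing only about $U_{0,m+2n}(X)/\sqrt n$ clusters of $X$, a single application of Lemma~\ref{lem:annulusToBulk} there will deliver what we want.

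First I would subdivide $A_{m,m+2n}$ into $M \approx \sqrt n$ disjoint concentric sub-annuli $A^{(k)} = A_{m_{k-1}, m_k}$, each of width at least $2\sqrt n$, with $m_0 = m$ and $m_M \le m+2n$. Writing $\omega_k := \omega \cap A^{(k)}$, the $\omega_k$'s are independent $\eps$-percolations on disjoint annuli. For each $k$, Lemma~\ref{lem:annulusToBulk} applied to the innermost width-$2\sqrt n$ slice $A_{m_{k-1}, m_{k-1}+2\sqrt n}$ of $A^{(k)}$, combined with the monotonicity remark that adding the remaining edges of $\omega_k$ can only decrease the cluster count, gives
\begin{equation*}
  U_{0,m_{k-1}}(X \cup \omega_k) \;\le\; \max\left(1,\; U_{m_{k-1}, m_k}(X)\right)
\end{equation*}
with probability at least $1 - C_2 e^{-C_1\sqrt n}$. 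A union bound over the $M \le \sqrt n$ sub-annuli makes this hold simultaneously for all $k$ with probability at least $1 - C_4 e^{-C_5\sqrt n}$.

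On that event, disjointness of the sub-annuli yields $\sum_k U_{m_{k-1},m_k}(X) \le U_{0,m+2n}(X)$, so pigeonhole produces an index $k^*$ with $U_{m_{k^*-1},m_{k^*}}(X) \le U_{0,m+2n}(X)/M$. Combining $X\cup\omega_{k^*} \subseteq X\cup\omega$ (fewer edges give more components, so the count drops when passing from $\omega_{k^*}$ to $\omega$) with $\Lambda_m \subseteq \Lambda_{m_{k^*-1}}$ (enlarging the target box can only increase the cluster count), I would chain
\begin{equation*}
  U_{0,m}(X\cup\omega) \;\le\; U_{0,m_{k^*-1}}(X\cup\omega) \;\le\; U_{0,m_{k^*-1}}(X\cup\omega_{k^*}) \;\le\; \max\left(1,\, U_{0,m+2n}(X)/M\right),
\end{equation*}
which is the target bound after the discrepancy between $M = \lfloor\sqrt n\rfloor$ and $\sqrt n$ is absorbed into the constants (for $n \ge n_0$ large enough).

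The hard part will be keeping the two monotonicity inequalities pointing the right way (edge-addition decreases the cluster count, while enlarging the target box increases it) and handling the integer rounding when subdividing into sub-annuli of width $\ge 2\sqrt n$; beyond that, the proof is essentially one invocation of Lemma~\ref{lem:annulusToBulk} per sub-annulus, glued together by pigeonhole.
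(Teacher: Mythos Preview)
Your argument is correct and follows the same pigeonhole-plus-Lemma~\ref{lem:annulusToBulk} strategy as the paper. The paper streamlines it in two small ways: since the good index $k^*$ depends only on $X$ (not on $\omega$), one can choose it \emph{before} invoking Lemma~\ref{lem:annulusToBulk} and then apply that lemma just once, making the union bound over all $k$ unnecessary; and by including $U_{0,m}(X)$ as an extra term in the pigeonhole sum one gets $1+\lfloor\sqrt n\rfloor\ge\sqrt n$ terms, so the minimum is genuinely $\le U_{0,m+2n}(X)/\sqrt n$ with no rounding to absorb.
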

\begin{proof}
  First observe that
  \begin{equation*}
     U_{0,m}(X)+\sum_{i=0}^{\lfloor \sqrt n\rfloor-1}U_{m+2i\sqrt n,
      m+2(i+1)\sqrt n}(X)\le U_{0,m+2n}(X).
  \end{equation*}
  Among the $1+\lfloor \sqrt n\rfloor$ terms summed on the left hand
  side, at least one of them must be smaller than or equal to
  $U_{m,m+2n}(X)/\sqrt n$. If $U_{0,m}(X)\le U_{m,m+2n}(X)/\sqrt n $,
  then Equation~\eqref{eq:9} is trivially true. Otherwise, one can
  fix $i$ such that $U_{m+2i\sqrt n, m+2(i+1)\sqrt n}(X)\le
  U_{m,m+2n}(X)/\sqrt n$. By Lemma~\ref{lem:annulusToBulk}, we have
  \begin{equation*}
   {U_{0,m+2i\sqrt n}(X\cup \omega) >1\vee\frac  {U_{0,m+2n}(X)}{\sqrt
        n}}\le C_2e^{-C_1\sqrt n}.
  \end{equation*}
  Then, use the inequality $U_{0,m}(X\cup \omega)\le U_{0,m+2i\sqrt n
  }(X\cup \omega)$ to conclude the proof.
\end{proof}

\begin{proof}[Proof of Lemma~\ref{lem:stretchExponential}]
Since $n_0$ depends only on $d$, it is sufficient to prove
Equation~\eqref{eq:1} in
Lemma~\ref{lem:stretchExponential} for $n\ge n_0$ (recall that $n_0$
was defined at the beginning of Section~\ref{sec:proof-theorem}). We
wish to apply Lemma~\ref{lem:annulusShrink} recursively in the $2d$
disjoint annuli
\[A_{(8d-2)n,8dn},A_{(8d-4)n,(8d-2)n},\ldots,A_{4dn,(4d+2)n}.\] For
$i=1,\ldots, 2d$, set $m(i)=(8d-2i)n$, and $\omega_i=\omega \cap
\Lambda_{m(i),8dn}$. By Lemma~\ref{lem:annulusShrink}, we have for all
$i<2d$
\begin{equation}
  \label{eq:10}
  \P{U_{0,m(i+1)}(X\cup \omega_{i+1})> 1\vee \frac{U_{0,m(i)}(X\cup \omega_i)}{\sqrt n}}\le
  C_2e^{-C_1\sqrt n}.
\end{equation}
By Equation~\eqref{eq:6}, we have $U_{0,8dn}(X)\le n^d$ for all $n\ge
n_0$. This implies that
\begin{align*}
  \P{U_{0,4dn}(X\cup \omega)=1}
  &\ge \P{\text{For all $i$, } U_{0,m(i+1)}(X\cup\omega_{i+1})\le 1\vee
    \frac{U_{0,m(i)}(X\cup\omega_i)}{\sqrt n}}\\
  &\ge 1-2d C_2e^{-C_1\sqrt n},
\end{align*}
where the last line follows from Equation~\eqref{eq:10}. This proves
that with probability larger than $1-C_3e^{-C_1\sqrt n}$, all the
vertices of $\Lambda_{4n}$ are $(X\cup\omega)$-connected inside
$\Lambda_{8n}$. Lemma~\ref{lem:stretchExponential} follows
straightforwardly.
\end{proof}

\section{Proof of  Theorem~\ref{thm:main}}
\label{sec:theor-impl-theor}

  Let $X$ be a fixed everywhere percolating subgraph of $\Z^d$ ,
and let $Y = X\cup \omega$ be obtained from $X$ by adding an
$\eps$-percolation $\omega$. In this section, we will show how to
derive Theorem~\ref{thm:main} from the following estimate, stated in Lemma~\ref{lem:stretchExponential}.
 \begin{equation*}
    \P{\text{For all $x,y\in \Lambda_n$, $x$ is $Y$-connected to $y$
       inside $\Lambda_{2n}$}}\ge 1-C_3e^{-C_1 \sqrt n}  
 \end{equation*}
 Recall that the constants $C_1,C_3$ do not depend on the underlying
 everywhere percolating graph $X$. Thus, by considering translates of
 $X$, we show that for every $z\in \Z^d$,
\begin{equation}
  \label{eq:11}
  \P{\text{For all $x,y\in \Lambda_n(z)$, $x$ is $Y$-connected to $y$
       inside $\Lambda_{2n}(z)$}}\ge 1-C_3e^{-C_1 \sqrt n}  
\end{equation}

\subsection{Proof of Item~(\ref{item:5})}
\label{sec:proof-item-5}
Let $p<1$. By Corollary 1.4 in \cite{ligget1997domination}, one can
pick $p'<1$ such that any $3$-dependent\footnote{A percolation process $Z$
  is said to be $3$-dependent if, given  two sets of edges $A$ and $B$
  such that any edge of $A$ is at $L^\infty$ distance at least $3$
  from any edge of $B$, the processes $Z\cap A$ and $Z\cap B$
  are independent.} percolation process $Z$ on
$\Z^d$, satisfying for every edge $e\in \mathds E^d$
\[\P{e\in Z}>p'\]
dominates stochastically a $p$-Bernoulli percolation.

Recall that the process $Y^{(n)}$ is defined by setting $\{x,y\}\in
Y^{(n)}$ if $2nx$ is $Y$-connected to $2ny$ inside
$\Lambda_{4n}(nx+ny)$. One can easily verify that for every $n\ge1$,
the process $Y^{(n)}$ is $3$-dependent. Thus, in order to show that for some $n$,
$Y^{(n)}$ dominates a $p$-Bernoulli percolation, one only need to
prove that for every edge $\{x,y\}\in \mathds E^d$, 
\begin{equation}
  \label{eq:12}
  \P{\text{$2nx$ is $Y$-connected to $2ny$ inside $\Lambda_{2n}(nx+ny)$}}>p'.
\end{equation}
Since both $2nx$ and $2ny$ belong to $\Lambda_{n}(nx+ny)$, we get that
Equation~\eqref{eq:12} holds for $n$ large enough, by applying
Equation~\eqref{eq:11} with $z=nx+ny$.

\subsection{Proof of Items~(\ref{item:3}) and (\ref{item:4})}
\label{sec:proof-item-3-and-4}

Let $p>p_c(\Z^2)$. By Item~(\ref{item:5}), one can pick $n$ such that
$Y^{(n)}$ dominates a $p$-Bernoulli percolation on $\Z^d$. It is known
that a $p$-Bernoulli percolation percolates in the half-plane
$\{2,3,\ldots\}\times\Z\times\{0_{\Z^{d-2}}\}$ (see e.g.~\cite{kesten1982percolation}). By stochastic
domination, the same holds for $Y^{(n)}$. This implies that $Y$
percolates in the half space $\N\times\Z^{d-1}$. When $d\ge 3$, it
also implies that  $Y$
percolates in the slab $\Z^2\times \{-2n,\ldots,2n\}^{d-2}$.

\subsection{Proof of Item~(\ref{item:2})}
\label{sec:proof-item-2}

We begin as in the proof of Item~(\ref{item:5}). By stochastic
domination arguments, one can fix $p'<1$ such that any $3$-dependent
percolation process $Z$ with $\P{e\in Z}>p'$ percolates in $\Z^d$. Let
$p''\in(p',1)$. By
Equation~\eqref{eq:11}, one can fix $n$ such that for every edge
$\{x,y\}\in \mathds E^d$, 
\[
 \P{\text{$2nx$ is $Y$-connected to $2ny$ inside $n(x+y)+\Lambda_{2n}$}}>p''.
\]
Let $\eta_q$ be a $q$-Bernoulli percolation process in $\Z^d$,
independent of $\omega$, and set $Y_q=Y\cap \eta_q$. This way, $Y_q$
corresponds exactly to a $q$-percolation on $Y$. Choose $q<1$ such
that all the edges of $\Lambda_{2n}$ belong to $\eta_q$ with
probability larger than $1-(p''-p')$. This way, for fixed $z$, the
processes $Y_q$ and $Y$ differs in the box $z+\Lambda_{2n}$ with
probability smaller than $(p''-p')$. Thus, for every edge
$\{x,y\}\in\mathds E^d$, we have
\begin{align*}
    &\P{\text{$2nx$ is $Y_q$-connected to $2ny$ inside
      $n(x+y)+\Lambda_{2n}$}}\\
&\quad \ge  \P{\text{$2nx$ is $Y$-connected to $2ny$ inside
      $n(x+y)+\Lambda_{2n}$}} -(p''-p')\\
&\quad >p'
\end{align*}
By the same stochastic argument that we already used several times,
this is enough to guarantee that the process $Y_q$ percolates in
$\Z^d$. This proves that $p_c(Y)\le q<1$ almost surely.

\subsection{Proof of Item~(\ref{item:1})}
\label{sec:proof-item-1}
Let $\cal U_n$ be the event that all the pairs of points in $\Lambda_n$
are $Y$-connected in $\Z^d$. By
Lemma~\ref{lem:stretchExponential}, we have
\begin{equation*}
    \sum \P{\cal U_n^c}<\infty.
\end{equation*}
Thus, by the Borel-Cantelli Lemma, we have $\P{\cup_{n}\cap_{m\ge n} \cal
  U_{m}}=1$, which implies that $Y$ is almost surely connected.

\section{Acknowledgements}
\label{sec:acknoledgement}
We are grateful to Matan Harel for his careful reading of a
preliminary version of this paper.

\bibliographystyle{alphaNames}
\bibliography{references}

\begin{thebibliography}{BDKY11}

\bibitem[AKN87]{AKN87}
M.~Aizenman, H.~Kesten, and C.~M. Newman.
\newblock Uniqueness of the infinite cluster and \mbox{continuity} of
  connectivity functions for short and long range percolation.
\newblock {\em Commun. Math. Phys.}, 111:505--531, 1987.

\bibitem[BDKY11]{benjamini2011disorder}
I.~Benjamini, H.~{Duminil-Copin}, G.~Kozma, and A.~Yadin.
\newblock Disorder, entropy and harmonic functions.
\newblock {\em arXiv preprint arXiv:1111.4853}, 2011.

\bibitem[BHS00]{benjamini2000effect}
I.~Benjamini, O.~H{\"a}ggstr{\"o}m, and O.~Schramm.
\newblock On the effect of adding {$\epsilon$}-{B}ernoulli \mbox{percolation}
  to everywhere percolating subgraphs of {${\bf Z}^d$}.
\newblock {\em J. Math. Phys.}, 41(3):1294--1297, 2000.
\newblock \mbox{Probabilistic} techniques in equilibrium and nonequilibrium
  statistical physics.

\bibitem[BLPS01]{benjamini2001uniform}
I.~Benjamini, R.~Lyons, Y.~Peres, and O.~Schramm.
\newblock Uniform spanning forests.
\newblock {\em Ann. Probab.}, 29(1):1--65, 2001.

\bibitem[BK89]{burton1989uniqueness}
R.~M. Burton and M.~Keane.
\newblock Density and uniqueness in percolation.
\newblock {\em Comm. Math. Phys.}, 121(3):501--505, 1989.

\bibitem[Cer13]{cerf2013lower}
R.~Cerf.
\newblock A lower bound on the two-arms exponent for critical percolation on
  the lattice, 2013.

\bibitem[Gri99]{grimmett1999}
G.~R. Grimmett.
\newblock {\em Percolation}, volume 321.
\newblock Springer Verlag, 1999.

\bibitem[GM90]{grimmett1990supercritical}
G.~R. Grimmett and J.~M. Marstrand.
\newblock The supercritical phase of percolation is well behaved.
\newblock {\em Proceedings of the Royal Society of London. Series A:
  Mathematical and Physical Sciences}, 430(1879):439, 1990.

\bibitem[Kes82]{kesten1982percolation}
H.~Kesten.
\newblock {\em Percolation theory for mathematicians}, volume~2 of {\em
  Progress in Probability and Statistics}.
\newblock Birkh\"auser Boston, Mass., 1982.

\bibitem[LSS97]{ligget1997domination}
T.~M. Liggett, R.~H. Schonmann, and A.~M. Stacey.
\newblock Domination by product measures.
\newblock {\em Ann. Probab.}, 25(1):71--95, 1997.

\bibitem[LS99]{lyons1999indistinguishability}
R.~Lyons and O.~Schramm.
\newblock Indistinguishability of percolation clusters.
\newblock {\em Ann. Probab.}, 27(4):pp. 1809--1836, 1999.

\bibitem[MT13]{martineau2013locality}
S.~Martineau and V.~Tassion.
\newblock Locality of percolation for abelian {C}ayley graphs.
\newblock {\em Preprint arXiv:1312.1946}, 2013.

\bibitem[NS81]{newman1981infinite}
C.~M. Newman and L.S. Schulman.
\newblock Infinite clusters in percolation models.
\newblock {\em Journal of \mbox{Statistical} Physics}, 26(3):613--628, 1981.

\bibitem[Tei14]{teixeira2014percolation}
A.~Teixeira.
\newblock Percolation and local isoperimetric inequalities.
\newblock {\em arXiv preprint arXiv:1409.5923}, 2014.

\end{thebibliography}

\end{document}